\newtheorem{theorem}{Theorem}    % Standard theorem environment
\newtheorem{proposition}{Proposition} 
\newtheorem{conjecture}{Conjecture} 
\newtheorem{question}{Question} 
\newtheorem{corollary}{Corollary}
\theoremstyle{definition}
\newtheorem{definition}[theorem]{Definition}
\newtheorem{remark}[theorem]{Remark}
\newtheorem*{remark*}{Remark}
\newcommand{\Z}{\mathbb{Z}}
\newcommand{\R}{\mathbb{R}}
\DeclareMathOperator{\rank}{rank}
\title{A slice Cromwell inequality of homogeneous links}
\author{Tetsuya Ito}
\begin{document}

\begin{abstract}
Cromwell proved that the minimum $v$-degree of the HOMFLY polynomial of homogeneous link $L$ is bounded above by $1-\chi(L)$, where $\chi(L)$ is the maximum Euler characteristic of Seifert surfaces of $L$.
We prove its slice version, stating that the minimum $v$-degree of the HOMFLY polynomial of homogeneous link $L$ is bounded above by $1-\chi_4(L)$, the maximum 4-dimensional Euler characteristic of $L$. As a byproduct, we prove a conjecture of Stoimenow that for an alternating link, the minimum $v$-degree of the HOMFLY polynomial is smaller than or equal to its signature. 
\end{abstract}

\maketitle

\section{Introduction}

A \emph{homogeneous link} is a link represented by a homogeneous diagram, a diagram whose Seifert graph is homogeneous. It is a common generalization of alternating links and positive or negative links, and has various nice properties.

Let $\chi(L)$ be the maximum Euler characteristic of Seifert surfaces of $L$ and let $P_L(v,z)$ be the HOMFLY polynomial of a link $L$, given by the skein relation
\[ v^{-1}P_{+}(v,z) - vP_{-}(v,z) = zP_{0}(v,z), \quad P_{\sf Unknot}(v,z)=1 \]
In \cite[Theorem 4 (b)]{cr} Cromwell showed the following, which we call the \emph{Cromwell inequality}.

\begin{theorem}[Cromwell inequality]
\label{theorem:Cromwell}
For a homogeneous link\footnote{In \cite{cr} link is always assumed to be non-split and the theorem is proved for non-split links, but one can check that the theorem applies for non-split links.} $L$, 
\begin{equation} 
\label{eqn:Cromwell-inequality-link}
\min \deg_v P_K(v,z) \leq 1-\chi(L)
\end{equation}
holds. Furthermore, the equality holds if and only if $L$ is positive.
\end{theorem}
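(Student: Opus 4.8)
The plan is to reduce the statement to the cases of positive and negative diagrams by means of the block decomposition of the Seifert graph. Fix a homogeneous diagram $D$ of $L$ and write $c(D)$, $w(D)$, $s(D)$ for its number of crossings, its writhe, and its number of Seifert circles. By \cite{cr}, Seifert's algorithm applied to a homogeneous diagram yields a minimal genus Seifert surface, so $1-\chi(L) = c(D) - s(D) + 1$. Let $D_{B_1},\dots,D_{B_k}$ be the special sub-diagrams of $D$ determined by the blocks $B_1,\dots,B_k$ of the Seifert graph: each $D_{B_j}$ is a homogeneous diagram with a single, monochromatic block, hence is a special positive diagram, a special negative diagram, or --- when $B_j$ is a bridge --- a one-crossing nugatory piece. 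A count on the block-cut tree gives $c(D)-s(D)+1 = \sum_{j=1}^{k}\left(c(D_{B_j}) - s(D_{B_j}) + 1\right)$, and since each $D_{B_j}$ is homogeneous the right-hand side equals $\sum_{j=1}^{k}\left(1-\chi(L_{B_j})\right)$, where $L_{B_j}$ is the link determined by $B_j$; moreover $L$ is the iterated Murasugi sum of the links $L_{B_j}$.

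It therefore suffices to prove three facts. First, that the minimum $v$-degree is sub-additive under Murasugi sum: $\min\deg_v P_{L_1 \ast L_2} \le \min\deg_v P_{L_1} + \min\deg_v P_{L_2}$. Second, that $\min\deg_v P_{L'} = 1-\chi(L')$ for a positive link $L'$: here the Morton--Franks--Williams inequality already gives $\min\deg_v P_{L'} \ge w(D') - s(D') + 1 = c(D') - s(D') + 1 = 1-\chi(L')$ for a positive diagram $D'$ realizing the genus, and the reverse inequality is the classical sharpness of that bound for positive diagrams, proved by induction on $c(D')$ using the skein relation in the form $P_{D'} = v^2 P_{D'_{-}} + vz\, P_{D'_{0}}$ at a positive crossing. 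Third, that $\min\deg_v P_{L'} \le \chi(L') - 1 < 1-\chi(L')$ for the link $L'$ of a negative block with at least two edges: by the Morton--Franks--Williams bound for a negative diagram $D'$, $\min\deg_v P_{L'} \le \max\deg_v P_{L'} \le w(D') + s(D') - 1 = -c(D') + s(D') - 1 = \chi(L') - 1$, while $\chi(L') = s(D') - c(D') \le 0$ since a $2$-connected graph with at least two edges has at least as many edges as vertices. Granting these, $\min\deg_v P_L \le \sum_{j}\min\deg_v P_{L_{B_j}} \le \sum_{j}\left(1-\chi(L_{B_j})\right) = 1-\chi(L)$, which is the inequality in question. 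For the equality clause, if equality holds then $\min\deg_v P_{L_{B_j}} = 1-\chi(L_{B_j})$ for every $j$, so by the third fact no block is negative with at least two edges; as a bridge of the Seifert graph is a nugatory crossing and may be removed, $D$ is then a positive diagram and $L$ is positive. Conversely, if $L$ is positive the second fact applied to a positive diagram of $L$ gives equality.

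\textbf{The main obstacle} is the first fact --- the behavior of the minimum $v$-degree under Murasugi sum --- together with the control of cancellation inside the skein induction used for the second fact; this is where homogeneity is genuinely needed. In the skein step $P_{D'} = v^2 P_{D'_{-}} + vz\, P_{D'_{0}}$ the diagram $D'_{0}$ is again positive, with one fewer crossing and the same Seifert circles (resolving a crossing does not change the output of Seifert's algorithm), so the inductive hypothesis controls the term $vz\, P_{D'_{0}}$; but $D'_{-}$ is no longer positive, and the Morton--Franks--Williams lower bound only ensures $\min\deg_v\!\left(v^2 P_{D'_{-}}\right) \ge c(D') - s(D') + 1$, the very degree at which the other term lives, so a priori an unlucky cancellation could raise $\min\deg_v P_{D'}$. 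Ruling this out requires either a judicious choice of which crossing to resolve or tracking a nonvanishing lowest-degree $v$-coefficient through the induction, and the monochromaticity of each block of a homogeneous Seifert graph is exactly what makes such a choice possible; the same structural input drives the proof of the Murasugi-sum inequality (alternatively one may bypass the Murasugi-sum language and run a single induction on the crossing number of a homogeneous diagram, resolving a crossing in an extremal position within one block). The remaining ingredients --- the block decomposition of a homogeneous diagram, the block-cut-tree bookkeeping for $c-s+1$, and the reduction in the equality clause --- are routine.
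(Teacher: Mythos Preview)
Your plan has the right architecture but leaves the load-bearing step unproven. Fact~1 as you state it --- sub-additivity of $\min\deg_v P$ under Murasugi sum --- is not a standard result, and you yourself flag it as ``the main obstacle'' without supplying an argument; likewise the cancellation issue in fact~2 is named but not resolved. So what you have is an outline, not a proof.

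The paper does not attack the inequality via Murasugi sums at all. Following Cromwell, it works directly with a single skein resolution tree for the homogeneous diagram $D$ and analyses only the top $z$-degree part $h_L(v)$ of $P_L(v,z)$. The key point (property~(b) in the sketch) is a sign computation: every terminal node that contributes to $h_L(v)$ does so with the \emph{same} sign $\prod_i \varepsilon(B_i)^{\rank B_i}$, so there is no cancellation whatsoever at the top $z$-level. One explicit terminal node (the one obtained by always smoothing) contributes the monomial $v^{\sum_i \varepsilon(B_i)\rank(B_i)} z^{\rank G_D}$, and since $\sum_i \varepsilon(B_i)\rank(B_i) \le \sum_i \rank(B_i) = \rank G_D = 1-\chi(L)$ with equality iff every block is positive, both the inequality and the equality clause fall out at once. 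This sign/non-cancellation argument is exactly the missing ingredient your proposal points at but does not provide.

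If you want to salvage the decomposition approach, the correct replacement for your fact~1 is not a statement about $\min\deg_v P$ but the Murasugi--Przytycki multiplicativity of the top $z$-coefficient $h_L(v)$ under star product; that is a known result and, combined with the easy computation of $h_{L'}(v)$ for a positive or negative block, reproduces precisely the monomial $v^{\sum_i \varepsilon(B_i)\rank(B_i)}$ above. In other words, done correctly your strategy converges to the same content as Cromwell's skein-tree argument, just packaged differently.
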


For a link $L$ with homogeneous diagram $D$, Seifert's algorithm gives a maximum Euler characteristic Seifert surface of $L$ hence $\chi(L)=s(D)-c(D)$, where $s(D)$ is the number of Seifert circles and $c(D)$ is the number of crossings of $D$ \cite[Corollary 4.1]{cr}. Thus we may understand Theorem \ref{theorem:Cromwell} as the inequality 
\begin{equation}
\label{eqn:Cromwell-inequality-diagram}
\min \deg_v P_K(v,z) \leq -s(D) + c(D)+1
\end{equation}
 for a homogeneous diagram $D$ of $L$.

The aim of this note is to give a strengthened version of Cromwell's inequality.
We point out that Cromwell's argument, as we will state in Theorem \ref{theorem:Cromwell2}, actually proves a much stronger result than stated in Theorem \ref{theorem:Cromwell}. Based on this observation, we prove the following generalization of diagram version of the Cromwell inequality \eqref{eqn:Cromwell-inequality-diagram}.

\begin{theorem}
\label{theorem:main}
For a homogeneous link $L$ with homogeneous diagram $D$, 
\[ \min \deg_v P_K(v,z) \leq -s(D)+w(D)+2s_+(D)+1 -2\#_{sp} L\]
holds.
\end{theorem}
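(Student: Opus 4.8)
The plan is to deduce Theorem~\ref{theorem:main} from the sharpened form of Cromwell's skein induction alluded to above (to be recorded as Theorem~\ref{theorem:Cromwell2}), by a split reduction, a purely combinatorial rewriting of the target bound, and an induction that removes negative crossings one at a time. First I would reduce to the connected case: if $L=L_1\sqcup\cdots\sqcup L_k$ with $k=\#_{sp}L$, then $L$ has a homogeneous diagram $D=D_1\sqcup\cdots\sqcup D_k$ with each $D_i$ connected and homogeneous, and from $P_L=\bigl(\tfrac{v^{-1}-v}{z}\bigr)^{k-1}\prod_i P_{L_i}$ we get $\min\deg_v P_L=-(k-1)+\sum_i\min\deg_v P_{L_i}$. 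Since $s,w,s_+$ are additive over $\sqcup$ and $\#_{sp}L_i=1$, the inequality for $D$ follows from the inequality for the $D_i$; so we may assume $L$ non-split and $D$ connected, and we must show $\min\deg_v P_L\le -s(D)+w(D)+2s_+(D)-1$.

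Next, a combinatorial translation. Let $\Gamma$ be the Seifert graph of $D$ and $\Gamma_+\subseteq\Gamma$ its spanning subgraph carrying only the positive crossings, so that $s_+(D)$ is the number of components of $\Gamma_+$. Adding the negative edges back to $\Gamma_+$ one at a time, exactly $s_+(D)-1$ of them decrease the component count, so the number $\rho(D):=c_-(D)-s_+(D)+1=b_1(\Gamma)-b_1(\Gamma_+)=\sum_{B\text{ a negative block}}b_1(B)$ of ``inessential'' negative crossings is nonnegative, and
\[
-s(D)+w(D)+2s_+(D)-1=\bigl(-s(D)+c(D)+1\bigr)-2\rho(D).
\]
Thus Theorem~\ref{theorem:main} is equivalent to the statement that Cromwell's bound \eqref{eqn:Cromwell-inequality-diagram} can be improved by $2$ for each inessential negative crossing.

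I would prove this by induction on $c_-(D)$, the base case $c_-(D)=0$ being Theorem~\ref{theorem:Cromwell} for a positive diagram (where $\rho(D)=0$, so the two bounds agree). For the inductive step, apply the skein relation at a negative crossing $c$, written $P_L=v^{-2}P_{D_+}-v^{-1}z\,P_{D_0}$, where $D_0$ is the oriented smoothing of $c$ (again a homogeneous diagram with one fewer negative crossing, to which induction applies) and $D_+$ is the crossing change at $c$ (no longer homogeneous, but differing from a homogeneous diagram only at the single crossing $c$ inside one block of $\Gamma$). Comparing $s,w,s_+$ of $D$, $D_0$, $D_+$, one checks that the $v$-degrees of the two monomials on the right are $\le -s(D)+w(D)+2s_+(D)-1$, so the only way the desired bound could fail is a total cancellation of the bottom $v$-powers between the $D_+$-term and the $D_0$-term; ruling this out is precisely where, beyond Theorem~\ref{theorem:Cromwell}, one needs the control on the extremal $v$-coefficient provided by Theorem~\ref{theorem:Cromwell2}: namely that for a homogeneous (and, as the diagram $D_+$ shows, ``almost homogeneous'') diagram the coefficient of $v^{-s(D)+c(D)+1}$, and of the $v$-powers just below it, is up to sign a single monomial in $z$ of predictable $z$-degree, and that this behaves predictably under both resolutions at the chosen crossing $c$.

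The main obstacle is exactly this last point: extracting from Cromwell's induction a statement precise enough about the extremal coefficient — and identifying at each stage a negative crossing whose two resolutions can both be tracked, including through the inhomogeneous diagram $D_+$ — to preclude degenerate cancellation. By comparison the split reduction and the identity $\rho(D)=c_-(D)-s_+(D)+1$ are routine. The stated consequences then follow: applying Theorem~\ref{theorem:main} to a reduced alternating diagram and matching $-s(D)+w(D)+2s_+(D)+1-2\#_{sp}L$ with the classical diagrammatic formula for the signature yields Stoimenow's inequality, while bounding the same quantity from below by $1-\chi_4(L)$ — by modifying the Seifert surface of $D$ inside $B^4$ using the inessential negative crossings — yields the slice Cromwell inequality announced in the abstract.
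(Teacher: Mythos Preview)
Your split reduction and your combinatorial rewriting are the paper's own ingredients: the identity $\rho(D)=\sum_{B\text{ negative}}b_1(B)$ together with $-s(D)+w(D)+2s_+(D)-1=\bigl(-s(D)+c(D)+1\bigr)-2\rho(D)$ is precisely Proposition~\ref{prop:key}, organized differently. Where you diverge is the induction on $c_-(D)$, and the gap you flag there is real. The skein relation at a negative crossing forces you to control $P_{D_+}$ for the non-homogeneous diagram $D_+$, and nothing in Cromwell's argument --- which lives entirely inside the homogeneous world --- gives that control. Your hope that Theorem~\ref{theorem:Cromwell2} should extend to ``almost homogeneous'' diagrams and track several $v$-powers below the Cromwell bound is speculative and would itself require a proof; as written this step does not go through.

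The paper sidesteps all of this, and the point is a different reading of Theorem~\ref{theorem:Cromwell2} than the one you propose. Cromwell's skein tree does not merely bound $\min\deg_v$ by $-s(D)+c(D)+1$; its rightmost terminal node (every crossing smoothed) already contributes, with a sign that the other top-$z$-degree terms cannot cancel, the single monomial $v^{a}z^{\rank G_D}$ with $a=\sum_i\varepsilon(B_i)\rank(B_i)$. So $P_L$ genuinely contains a monomial at $v$-degree $a$. Since your own computation gives $a=-s(D)+w(D)+2s_+(D)-1$, Theorem~\ref{theorem:main} is immediate --- no induction on negative crossings, no passage through $D_+$. (For the corollaries: the slice Cromwell inequality is then obtained from the slice-torus bound of \cite{cc} rather than by a direct surface modification; the signature application does proceed via Traczyk's diagrammatic formula, matching it with $\sum_i\varepsilon(B_i)\rank(B_i)$, essentially as you outline.)
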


Here, $w(D)$ is the writhe of $D$, $s_+(D)$ is the number of connected components of diagram obtained from $D$ by resolving all the negative crossings of $D$, and $\#_{sp} L$ is the number of split components of $L$.

The right-hand side in Theorem \ref{theorem:main} appears in the following so-called sharper Bennequin-type inequality. Let $\chi_4(L)$ be the maximum Euler characteristic of a smoothly embedded surface in $B^{4}$ whose boundary is $L$. A \emph{slice-torus invariant} of knot is a homomorphism $\phi:\mathcal{C} \rightarrow \R$ from the smooth knot concordance group $\mathcal{C}$ to $\R$, such that $\phi(T_{p,q})= g_4(T_{p,q})$ for the $(p,q)$-torus knot $T_{p,q}$ \cite{le}. For the definition of slice-torus invariant of links we refer to \cite{cc}.

\begin{theorem}\cite[Theorem 1.4]{cc}
\label{theorem:slice-torus}
Let $\phi$ be a slice-torus invariant.
Then for a link $L$ and its non-splittable diagram $D$, 
\[ -s(D)+w(D)+2s_+(D)+1-2\#_{sp} L \leq 2\left(\phi(L) - \frac{\# L-1}{2} \right) \leq 1-\chi_4(L)\]
holds. 
\end{theorem}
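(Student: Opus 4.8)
The plan is to derive the two inequalities independently, using only the axiomatic properties of a slice-torus link invariant $\phi$ recorded in \cite{cc}: the normalisation $\phi(\mathrm{Unknot})=0$; additivity under disjoint union, $\phi(L_1\sqcup L_2)=\phi(L_1)+\phi(L_2)-1$; the cobordism inequality, which bounds the change of $\phi$ along an oriented cobordism in $S^3\times[0,1]$ with no closed components in terms of its Euler characteristic and the merge/split/birth/death pattern of a movie presentation; and the resulting sharp value of $\phi$ on positive links — namely $2\phi(L')=c_+(D')-s(D')+\#L'$ for a positive diagram $D'$ — which follows by bootstrapping from the torus-link normalisation $\phi(T_{p,q})=g_4(T_{p,q})$ through the cobordism inequality.

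\emph{Right inequality.} Fix a properly embedded surface $F\subset B^4$ with $\partial F=L$ and $\chi(F)=\chi_4(L)$. Splitting $F$ into its connected components and applying additivity under disjoint union reduces the claim to the case of connected $F$, the error terms adding up in the right direction. For connected $F$, read $F$ radially from $\partial B^4$ inward as a movie from $L$ to $\varnothing$ with $\sigma$ saddles, $b$ births and $d$ deaths, so that $\chi(F)=b+d-\sigma$; each birth or death changes $\phi$ by $\mp1$ and each saddle by at most $1$, while connectedness of $F$ forces at least $\#L-1$ of the saddles to merge previously distinct boundary circles. Summing these per-move estimates along the movie gives $2\phi(L)-(\#L-1)\le 1-\chi(F)$. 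This is the familiar "a concordance invariant bounds the slice Euler characteristic" argument, and I expect it to go through routinely.

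\emph{Left inequality.} Starting from the non-splittable diagram $D$, with $c_\pm$ crossings of each sign and $w(D)=c_+-c_-$, let $D'$ be obtained by performing the oriented smoothing at every negative crossing. Then $D'$ is a positive diagram, it has the same Seifert state as $D$ — so $s(D')=s(D)$ — and it has $s_+(D)$ plane-connected components, representing a positive link $L'$ with $\#_{sp}L'=s_+(D)$. Performing the $c_-$ smoothings one at a time exhibits a cobordism $\Sigma$ from $L$ to $L'$ with no closed components and $\chi(\Sigma)=-c_-$, so the cobordism inequality controls $\phi(L)-\phi(L')$, while the positive-link value gives $2\phi(L')=c_+-s(D)+\#L'$. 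Combining the two estimates and rewriting $\#L'$, $w(D)$ and the component-count corrections in terms of $\#L$, $\#_{sp}L$ and $s_+(D)$ yields the asserted bound: the term $-2\#_{sp}L$ reflects that $\Sigma$ breaks up over the split components of $L$, and the term $2s_+(D)$ tracks the discrepancy between the Seifert-circle count $s(D)$ of $D'$ and its diagrammatic component count $\#_{sp}L'=s_+(D)$.

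The main obstacle is the left inequality, specifically two points: (i) establishing the sharp value of $\phi$ on positive links from the axioms alone, in the spirit of the slice-Bennequin inequality for the Rasmussen invariant; and (ii) the exact bookkeeping through the cobordism $\Sigma$, where one must pin down the coefficients of both $s_+(D)$ and $\#_{sp}L$ simultaneously — and it is precisely the split-versus-merge refinement of the cobordism inequality that forces the sharper $s_+(D)$ term to appear rather than the weaker $s(D)$ term. The right inequality and the reduction to connected surfaces, by contrast, are standard.
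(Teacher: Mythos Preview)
The paper does not prove this theorem at all: it is quoted verbatim from \cite[Theorem 1.4]{cc} and used as an input to deduce Corollary~\ref{cor:slice}. There is therefore no ``paper's own proof'' to compare against.

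That said, your plan is essentially the argument carried out in \cite{cc} (which in turn extends the Kawamura--Lobb sharper slice-Bennequin inequality for the Rasmussen invariant to arbitrary slice-torus link invariants). The right-hand inequality is, as you say, the routine slice-genus bound coming from the cobordism axiom. For the left-hand inequality your strategy---smooth all negative crossings to obtain a positive diagram $D'$ with $s(D')=s(D)$ and $\#_{sp}L'=s_+(D)$, evaluate $\phi$ exactly on the resulting positive link, and then control the change of $\phi$ along the $c_-$-saddle cobordism---is precisely the one used there. Your identification of the two genuine subtleties is also accurate: one needs (i) the exact value of $\phi$ on positive links, proved in \cite{cc} from the torus-knot normalisation via a standard cobordism argument, and (ii) the refined form of the cobordism inequality distinguishing merging from splitting saddles, which is what produces the $2s_+(D)$ term rather than a weaker bound. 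The bookkeeping you allude to (relating $\#L'$, $\#L$, $\#_{sp}L$, $s_+(D)$) is delicate but goes through; since $D'$ is a positive, hence homogeneous, diagram, each of its connected components is non-splittable, so indeed $\#_{sp}L'=s_+(D)$.
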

Here $\#L$ denotes the number of components of $L$, and a diagram $D$ is \emph{non-splittable} means that the number of connected components of $D$ is equal to $\#_{sp} L$, the number of split components of $L$. 

Since homogeneous diagram is non-splittable \cite[Corollary 3]{cr}, Theorem \ref{theorem:main} and Theorem \ref{theorem:slice-torus} upgrades the Cromwell inequality \eqref{eqn:Cromwell-inequality-link} to the \emph{slice Cromwell inequality}.

\begin{corollary}[Slice Cromwell inequality]
\label{cor:slice}
For a homogeneous link $L$, 
\[ \min \deg_v P_L(v,z) \leq 1-\chi_4(L)\]
holds.
\end{corollary}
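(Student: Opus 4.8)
The plan is to deduce Corollary~\ref{cor:slice} directly by chaining the two main inputs. First I would invoke Theorem~\ref{theorem:main}: for a homogeneous link $L$ with homogeneous diagram $D$,
\[
\min\deg_v P_L(v,z) \leq -s(D)+w(D)+2s_+(D)+1-2\#_{sp}L.
\]
Since by \cite[Corollary 3]{cr} a homogeneous diagram is non-splittable, the hypothesis of Theorem~\ref{theorem:slice-torus} is satisfied with this very diagram $D$. Choosing $\phi$ to be any slice-torus invariant of links (these exist, e.g.\ via $\tau$ or $s/2$, as discussed in \cite{cc}), Theorem~\ref{theorem:slice-torus} gives
\[
-s(D)+w(D)+2s_+(D)+1-2\#_{sp}L \;\leq\; 2\Bigl(\phi(L)-\tfrac{\#L-1}{2}\Bigr)\;\leq\; 1-\chi_4(L).
\]
Concatenating the two displayed inequalities yields $\min\deg_v P_L(v,z)\leq 1-\chi_4(L)$, which is the claim.

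The only point needing a word of care is the matching of conventions between the two theorems. Theorem~\ref{theorem:main} is stated for a homogeneous diagram $D$ of $L$, whereas Theorem~\ref{theorem:slice-torus} requires a non-splittable diagram; I would make explicit that the same $D$ serves both roles, citing \cite[Corollary 3]{cr} for non-splittability. I should also confirm that the quantities $s(D)$, $w(D)$, $s_+(D)$, $\#_{sp}L$ are normalized identically in both statements — they are, since both are drawn from the same circle of ideas around the Bennequin-type inequalities — so no correction term appears. Strictly speaking one also wants the existence of at least one slice-torus invariant of links; this is guaranteed by \cite{cc}, so the middle term in Theorem~\ref{theorem:slice-torus} is not vacuous.

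I do not anticipate a genuine obstacle here: once Theorem~\ref{theorem:main} is in hand, the corollary is a two-line consequence of Theorem~\ref{theorem:slice-torus}. If anything, the ``hard part'' is purely bookkeeping — verifying that the right-hand side of Theorem~\ref{theorem:main} is literally the left-hand side of the inequality chain in Theorem~\ref{theorem:slice-torus}, so that the two can be glued without any adjustment. Granting that, the proof is complete.
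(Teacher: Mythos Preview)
Your argument is correct and matches the paper's own reasoning exactly: the corollary is deduced by combining Theorem~\ref{theorem:main} with Theorem~\ref{theorem:slice-torus}, after noting via \cite[Corollary~3]{cr} that a homogeneous diagram is non-splittable so that the same diagram $D$ feeds into both theorems. No additional ingredient is needed.
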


Recall that $\min \deg_{v} P_L(v,z)$ appears as the Morton-Franks-Williams inequality  \cite{fw,mo}
\[ \overline{sl}(L) \leq \min \deg_v P_L(v,z) - 1, \]
of the maximum self-linking number 
\[ \overline{sl}(L) = \max\{-s(D)+w(D)\: | \: D \mbox{ is a diagram of } L\}.\] Theorem \ref{theorem:main} says that for homogeneous links, the Morton-Franks-Williams inequality subsumes all the Bennequin-type inequality 
\[ \overline{sl}(L) \leq 2\left(\phi(L) - \frac{\# L-1}{2} \right) -1 \]
of the slice-torus invariants $\phi$.

As a generalization of the equality condition of Cromwell inequality, we raise the following conjectures. 

\begin{conjecture}
\label{conj:positive}
Let $L$ be a homogeneous link.
\begin{itemize}
\item[(i)] The slice Cromwell inequality is equality if and only if $L$ is positive.
\item[(ii)] For a homogeneous diagram $D$ of $L$, the inequality in Theorem \ref{theorem:main} is equality if and only if $D$ is a positive diagram.
\end{itemize}
\end{conjecture}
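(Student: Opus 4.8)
\emph{Overall plan and the easy directions.} The plan is to deduce part (i) from part (ii) together with the equality case of the classical Cromwell inequality, and then to attack part (ii) by tracking the equality case through the proof of Theorem \ref{theorem:main}. Write $\beta(D):=-s(D)+w(D)+2s_+(D)+1-2\#_{sp}L$ for the right-hand side of Theorem \ref{theorem:main}. If $D$ is a positive diagram then $c_-(D)=0$, so resolving the negative crossings does nothing and $s_+(D)$ equals the number of connected components of $D$, which is $\#_{sp}L$ by non-splittability; since $w(D)=c(D)$ this collapses $\beta(D)$ to the classical Cromwell bound $-s(D)+c(D)+1$ of \eqref{eqn:Cromwell-inequality-diagram}. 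A positive diagram represents a positive link, so the equality case of Theorem \ref{theorem:Cromwell} gives $\min\deg_v P_L=-s(D)+c(D)+1=\beta(D)$, which is the ``if'' direction of (ii). For the ``if'' direction of (i): if $L$ is positive then Theorem \ref{theorem:Cromwell} gives $\min\deg_v P_L=1-\chi(L)$, and for positive links the Seifert surface produced by Seifert's algorithm is already maximal-Euler-characteristic in $B^4$ (the slice--Bennequin inequality is sharp), so $\chi(L)=\chi_4(L)$ and the slice Cromwell inequality is an equality. Conversely, if the slice Cromwell inequality is an equality, pass to a reduced homogeneous diagram $D$ of $L$ (removing nugatory crossings preserves homogeneity); then the chain $\min\deg_v P_L\le\beta(D)\le 1-\chi_4(L)$ coming from Theorem \ref{theorem:main} and Theorem \ref{theorem:slice-torus} is squeezed between equal endpoints, so Theorem \ref{theorem:main} is an equality for $D$, and granting (ii) this forces $D$, hence $L$, to be positive. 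Thus (i) reduces to (ii).

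\emph{A caveat on (ii).} The ``only if'' direction of (ii) cannot hold verbatim: starting from any reduced positive diagram and attaching one positive and one negative nugatory curl produces a homogeneous diagram that still achieves equality in Theorem \ref{theorem:main} (the two curls contribute a canceling $+2$ to $2s_+(D)$), yet is not positive. One should therefore prove (ii) for \emph{reduced} homogeneous diagrams; this is exactly what the reduction above uses, so it costs nothing for (i).

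\emph{The hard direction of (ii).} Let $D$ be a reduced homogeneous diagram with $c_-(D)>0$; the goal is the strict inequality $\min\deg_v P_L<\beta(D)$. I would run the proof of Theorem \ref{theorem:main}, which sharpens Cromwell's argument (Theorem \ref{theorem:Cromwell2}) along the block decomposition of the homogeneous Seifert graph, and keep track of the coefficient of the extreme power $v^{\beta(D)}$. Because the HOMFLY polynomial is multiplicative under the Murasugi sums that glue the blocks, this extreme coefficient should factor as a product of contributions of the individual (special alternating) blocks. Each positive block contributes a nonzero $z$-monomial of the expected degree --- this is precisely the nonvanishing computation underlying Cromwell's equality case --- so equality at degree $\beta(D)$ would be equivalent to \emph{every} block contributing nonvanishingly. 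The statement to establish is that a negative block forces its contribution at the relevant degree to vanish, so that the genuine minimal $v$-degree drops strictly below $\beta(D)$; the negative trefoil and the square knot $T_{2,3}\#\overline{T_{2,3}}$ (for which $\min\deg_v P=-2<0=\beta(D)$) exhibit this drop concretely.

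\emph{The main obstacle.} The difficulty is exactly this vanishing/nonvanishing dichotomy at the refined degree $\beta(D)$. Cromwell's original analysis controls the extreme coefficient only at the coarser degree $-s(D)+c(D)+1$, where positive and negative blocks enter symmetrically; at the sharper degree $\beta(D)$ they enter asymmetrically, and one must rule out accidental cancellation both among the block contributions and against the lower-order terms generated by the skein relation. Carrying this out appears to require a genuinely finer description of the lowest-$v$-degree part of the HOMFLY polynomial of a single special alternating block than Theorem \ref{theorem:Cromwell2} supplies, and I expect this computation to be the real content of the conjecture.
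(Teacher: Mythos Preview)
The statement you are trying to prove is a \emph{conjecture}: the paper does not prove it, and explicitly presents it as open. So there is no ``paper's own proof'' to compare against. Your easy directions and the reduction of (i) to (ii) are fine and essentially routine; your caveat about nugatory curls is a genuine and correct observation that (ii) as stated needs the hypothesis that $D$ be reduced.

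Where your proposal goes wrong is in the strategy for the hard direction. You assert that ``the HOMFLY polynomial is multiplicative under the Murasugi sums that glue the blocks,'' but this is false: only the \emph{top $z$-degree coefficient} $h_L(v)$ is multiplicative under Murasugi (star) product, not the full polynomial. If you retreat to working with $h_L(v)$ --- which is what the block-by-block factorization actually gives you, and is exactly the content of Theorem~\ref{theorem:Cromwell2} and Theorem~\ref{theorem:main2} --- then the paper's own Remark~\ref{remark:positive} blocks you: the non-positive alternating knot $6_1$ already satisfies $\min\deg_v h_L(v)=\beta(D)=1-\chi_4(L)$, so the extreme coefficient at degree $\beta(D)$ in $h_L(v)$ does \emph{not} vanish in the presence of negative blocks. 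In other words, the dichotomy you hope for (``a negative block forces its contribution at the relevant degree to vanish'') is simply false at the level of $h_L(v)$. Any proof of the hard direction must therefore detect a monomial of strictly lower $v$-degree coming from a \emph{lower} $z$-degree term of $P_L$, and the Murasugi-sum/block-decomposition machinery you invoke gives no control there. Your ``main obstacle'' paragraph senses this, but the specific mechanism you propose cannot succeed as written; the conjecture remains open for a reason.
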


Conjecture \ref{conj:positive} is related to the following problem by Baader \cite{ba}.

\begin{question}
\label{ques:Baader}
Is homogeneous quasipositive link positive ?
\end{question}

Here a link $L$ is called \emph{quasipositive} if it is a closure of quasipositive braid, a product of conjugates of the positive generators $\sigma_{1},\ldots,\sigma_{n-1}$ of the braid group.

Although Question \ref{ques:Baader} remains open even for alternating links, in \cite{it}, extending alternating link case argument \cite{or}, we gave a partial affirmative answer. We showed that if a homogeneous link $L$ admits a homogeneous diagram that attains the maximum self-linking number (for example, when the number of Seifert circles of $D$ is equal to the braid index of $L$), then $L$ is quasipositive if and only if the diagram is positive.

Conjecture \ref{conj:positive} leads to an affirmative answer to Question \ref{ques:Baader}. 

\begin{theorem}
If Conjecture \ref{conj:positive} (i) or (ii) is true, quasipositive homogeneous links are positive.
\end{theorem}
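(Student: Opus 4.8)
The plan is to show that every quasipositive homogeneous link $L$ satisfies
\[ \min\deg_v P_L(v,z) = 1-\chi_4(L), \]
that is, the slice Cromwell inequality of Corollary \ref{cor:slice} is sharp for such $L$. Granting this, Conjecture \ref{conj:positive}(i) immediately gives that $L$ is positive. For the variant using Conjecture \ref{conj:positive}(ii): fix any homogeneous diagram $D$ of $L$ (one exists since $L$ is homogeneous), which is non-splittable by \cite{cr}, so that Theorems \ref{theorem:main} and \ref{theorem:slice-torus} yield the chain
\[ 1-\chi_4(L) \;=\; \min\deg_v P_L(v,z) \;\le\; -s(D)+w(D)+2s_+(D)+1-2\#_{sp} L \;\le\; 1-\chi_4(L); \]
hence the inequality in Theorem \ref{theorem:main} is an equality for $D$, and Conjecture \ref{conj:positive}(ii) forces $D$, and therefore $L$, to be positive.

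So it remains to prove: \emph{if $L$ is quasipositive, then $\min\deg_v P_L(v,z)\ge 1-\chi_4(L)$} (the reverse inequality being Corollary \ref{cor:slice}, which uses that $L$ is homogeneous). Write $L=\widehat\beta$ for a quasipositive braid $\beta=\prod_{i=1}^k w_i\sigma_{j_i}w_i^{-1}\in B_n$. Attaching $k$ bands to $n$ disks produces a surface properly and smoothly embedded in $B^4$ with boundary $L$ and Euler characteristic $n-k$, so $\chi_4(L)\ge n-k$. On the other hand, the closed braid diagram $D=\widehat\beta$ has $s(D)=n$ Seifert circles and writhe $w(D)=k$, the exponent sum of $\beta$ (each conjugate $w_i\sigma_{j_i}w_i^{-1}$ contributes $+1$), so the Morton--Franks--Williams inequality \cite{fw,mo} applied to $D$ gives
\[ \min\deg_v P_L(v,z) \;\ge\; w(D)-s(D)+1 \;=\; k-n+1 \;=\; 1-(n-k) \;\ge\; 1-\chi_4(L), \]
as desired. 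Combining with Corollary \ref{cor:slice} additionally forces $\chi_4(L)=n-k$, recovering in passing Rudolph's fact that the quasipositive surface is genus-maximizing in $B^4$ for these links.

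There is no serious obstacle here: the whole content is packaged into Corollary \ref{cor:slice} (the main result of this note), into the conjectural equality cases, and into the elementary inequality $\chi_4(L)\ge n-k$ together with Morton--Franks--Williams. The two points that demand care are getting the orientation of the Morton--Franks--Williams bound right for the skein normalization fixed in the introduction, and remembering that the closed braid $\widehat\beta$ need not be a non-splittable diagram, so that Theorem \ref{theorem:slice-torus} must be invoked on a homogeneous diagram of $L$ and never on $\widehat\beta$ itself.
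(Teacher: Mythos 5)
Your proof is correct and follows the same overall strategy as the paper: sandwich $\min\deg_v P_L(v,z)$ between a Bennequin-type lower bound coming from the quasipositive braid closure and the upper bound $1-\chi_4(L)$, force equality throughout the chain, and then invoke either part of Conjecture \ref{conj:positive}. The one place you diverge is in how quasipositivity enters: the paper quotes Rudolph's equality $-\chi_4(L)=\overline{sl}(L)$ wholesale, whereas you use only its elementary half (the quasipositive Bennequin surface gives $\chi_4(L)\ge n-k$) together with the Morton--Franks--Williams inequality applied directly to the diagram $\widehat{\beta}$, which makes the argument marginally more self-contained and, as you note, recovers $\chi_4(L)=n-k$ for quasipositive homogeneous links as a byproduct. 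Your treatment of case (ii) --- applying Theorems \ref{theorem:main} and \ref{theorem:slice-torus} to a homogeneous (hence non-splittable) diagram rather than to $\widehat{\beta}$ --- is exactly the paper's chain of inequalities.
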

\begin{proof}
For homogeneous link $L$ with homogeneous diagram $D$ we have 
\begin{align*}
\overline{sl}(L) 
&\leq \min \deg_{v} P_L(v,z)-1 \leq -s(D)+w(D) + 2s_+(D) - 2\#_{sp}L \leq -\chi_4(L)
\end{align*}
Since $-\chi_4(L)=\overline{sl}(L)$ holds for a quasipositive link $L$ \cite{rud}, it follows that a quasipositive homogeneous link attains the equality for both the slice Cromwell inequality and the equality of Theorem \ref{theorem:main}.
Thus assuming Conjecture \ref{conj:positive} (i) or (ii) we conclude that $L$ is positive. 
\end{proof}

Finally, we point out Theorem \ref{theorem:Cromwell2}, the actual content of Cromwell's proof, can be used to solve \cite[Problem 1.16]{Oht+}, \cite[Conjecture 4.1]{st}\footnote{\cite[Problem 1.16]{Oht+}, \cite[Conjecture 4.1]{st} also asks the inequality $\min \deg_a F_L(a^{-1},z) \leq \min \deg_v P_K(v,z)$ for the Kauffman polynomial $F_L(a,z)$. This inequality was proven in \cite[Corollary 5.4]{rut}.} concerning the signature\footnote{Here we use the convention that $\sigma(\mbox{Positive trefoil})=2$.} $\sigma(L)$ and the minimum degree of the HOMFLY polynomial for alternating links.

\begin{theorem}
\label{theorem:signature}
For an alternating link $L$, 
\[ \min \deg_v P_L(v,z) + \#_{sp}(L) -1 \leq \sigma(L)\]
holds.
\end{theorem}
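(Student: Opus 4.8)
The plan is to reduce the signature inequality to Theorem \ref{theorem:main} (equivalently Corollary \ref{cor:slice}) applied to alternating links, using that for alternating links the signature computes (up to a normalization) a quantity that dominates $1-\chi_4$. Concretely, for an alternating link $L$ one has the classical fact, due to work building on Gordon--Litherland and the sharpness of the slice-Bennequin bound for alternating links, that a reduced alternating diagram realizes $\chi_4(L)$ — indeed $\chi_4(L) = \chi(L) = s(D) - c(D)$ for a reduced alternating diagram $D$ — and moreover the signature satisfies $\sigma(L) \geq -s(D) + w(D) + 2s_+(D) + 1 - 2\#_{sp}L + (\#_{sp}L - 1)$. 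So the first step is to recall/assemble these diagrammatic formulas for alternating links: write down $\sigma(L)$ in terms of the Goeritz form or the Gordon--Litherland form of a reduced alternating checkerboard-colored diagram, and compare it with the right-hand side of Theorem \ref{theorem:main}.

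The key steps, in order, are: (1) Fix a reduced alternating diagram $D$ of $L$; since alternating diagrams are homogeneous and non-splittable (\cite[Corollary 3]{cr}), Theorem \ref{theorem:main} gives $\min\deg_v P_L(v,z) \le -s(D) + w(D) + 2s_+(D) + 1 - 2\#_{sp}L$. (2) Identify the right-hand side with a slice-torus-type quantity via Theorem \ref{theorem:slice-torus}: the middle term $2(\phi(L) - \tfrac{\#L - 1}{2})$ is squeezed between the diagrammatic quantity and $1 - \chi_4(L)$. (3) Show that for alternating $L$ the signature (in the stated convention) satisfies $\sigma(L) + \#_{sp}(L) - 1 \le 1 - \chi_4(L)$, or more directly $\sigma(L) \ge -s(D) + w(D) + 2s_+(D) - 2\#_{sp}L + 2$; this should follow from the Gordon--Litherland formula $\sigma(L) = \mathrm{sig}(G_L) - \mu$ together with the combinatorics of Seifert circles versus checkerboard regions in an alternating diagram (the interplay $s(D) = $ (number of one color of regions), $w(D) = $ writhe, and $s_+(D)$ counting all-positive-resolution components). (4) Chain the inequalities. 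Alternatively — and this is the route suggested by the footnote referencing \cite{rut} and by the phrase "Theorem \ref{theorem:Cromwell2}, the actual content of Cromwell's proof" — one bypasses slice genus entirely: Cromwell's refined statement (Theorem \ref{theorem:Cromwell2}) produces a concrete lower bound on the term in $P_L$ of minimal $v$-degree, whose coefficient can be matched against the Seifert-form computation of $\sigma$ for alternating links, since alternating links have definite-like Seifert forms on each block.

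The main obstacle I expect is step (3): getting the precise constant right in the comparison between $\sigma(L)$ and $-s(D)+w(D)+2s_+(D)+1-2\#_{sp}L$, including the additive $\#_{sp}(L)-1$ correction. This requires care with sign conventions (the problem explicitly fixes $\sigma(\text{positive trefoil}) = 2$) and with the behaviour of all quantities under split union, so I would first verify the identity on split unions of Hopf links and $(2,n)$-torus links to pin down every constant before attempting the general Gordon--Litherland argument. A secondary subtlety is that $\chi_4$ and $\sigma$ need not agree for alternating links in general (they do for, say, special alternating links, but the sharp inequality $|\sigma(L)| \le -\chi_4(L) + \text{(correction)}$ is what is actually available), so the cleanest path is probably to avoid $\chi_4$ and work directly with the diagrammatic quantity $-s(D)+w(D)+2s_+(D)+1-2\#_{sp}L$ from Theorem \ref{theorem:main}, proving $\sigma(L) \ge$ that quantity $ + \#_{sp}(L) - 1$ by a direct computation with the Seifert form of the reduced alternating diagram $D$.
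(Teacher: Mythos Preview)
Your proposal is a plan rather than a proof, and the decisive step~(3) is left entirely open. Two concrete issues. First, the route through $\chi_4$ and slice-torus invariants (your steps (2)--(3)) cannot close the argument: to deduce $\min\deg_v P_L + \#_{sp}L - 1 \le \sigma(L)$ from Corollary~\ref{cor:slice} you would need $-\chi_4(L) + \#_{sp}L \le \sigma(L)$, which is false for general alternating links (take any negative alternating link). You correctly flag this at the end, so this path should simply be dropped, not hedged. Second, your arithmetic in the ``more direct'' version of step~(3) is off: chaining Theorem~\ref{theorem:main} with the desired conclusion requires $\sigma(L) \ge -s(D)+w(D)+2s_+(D) - \#_{sp}L$, not $-2\#_{sp}L+2$; your stated inequality is actually false already for the unknot.

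What the paper does is exactly your ``alternative'' route, but with a sharper observation that makes step~(3) a one-line computation rather than an open problem: for a connected reduced alternating diagram $D$ one has the \emph{equality}
\[
\sigma(L) \;=\; \sum_{i} \varepsilon(B_i)\,\rank(B_i) \;=\; -s(D)+w(D)+2s_+(D)-1,
\]
the second equality being Proposition~\ref{prop:key}. The first equality follows from Traczyk's combinatorial signature formula $\sigma(L) = w(D) - (d_+ - d_-)$, where $d_\pm$ counts the positive/negative edges in any spanning tree of $G_D$; since a spanning tree restricts to a spanning tree on each block, $d_\pm = \sum_i(\#V(B_i^\pm)-1)$ and the identity drops out. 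With this in hand, Theorem~\ref{theorem:Cromwell2} immediately gives $\min\deg_v P_L \le \sigma(L)$ in the non-split case, and the split case follows by additivity. So your instinct to bypass $\chi_4$ and compute $\sigma$ diagrammatically was right; the missing ingredient is that this comparison is an exact identity, not merely an inequality requiring Gordon--Litherland bookkeeping.
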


\section*{Acknowledgements}
 
The author is partially supported by JSPS KAKENHI Grant Numbers 21H04428 and 23K03110.

\section{Proof of Theorem}

Let $G$ be a signed graph which is not necessarily connected, but is not allowed to have a self-loop. A signed means that the sign $+$ or $-$ is assigned for each edge. A signed graph is \emph{positive} (resp.\emph{negative}) if all the signs of edges are positive (resp. negative).

A vertex $v$ of a graph $G$ is a \emph{cut vertex} if removing $v$ increase the number of connected components. By splitting the graph along its cut vertices, each graph is decomposed as union of \emph{blocks}, connected maximum subgraphs without cut vertices.
The \emph{rank} of a graph $G$ is defined by $\rank(G)= -\#V(G)+\#E(G)+\#G$, where $V(G)$ and $E(G)$ denotes the set of vertices and edges of $G$, and $\#G$ denotes the number of connected components of $G$.

The \emph{Seifert graph} $G_D$ of a link diagram $D$ is a signed graph whose vertices are Seifert circles and whose edges are crossings. 

\begin{definition}[Homogeneous graphs and links]
A signed graph is \emph{homogeneous} if each block is either positive or negative. A link diagram $D$ is \emph{homogeneous} if its Seifert graph $G_D$ is homogeneous.  
\end{definition}

\begin{figure}[htbp]
\begin{center}
\includegraphics*[width=100mm]{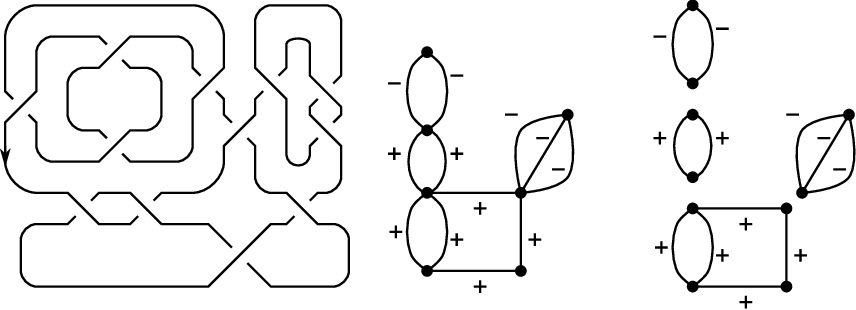}
\begin{picture}(0,0)
\put(-300,100) {(i)}
\put(-160,100) {(ii)}
\put(-90,100) {(ii)}
\end{picture}
\end{center}
\caption{(i) Homegenous diagram (ii) Seifert graph (iii) The blocks of Seifert graph} 
\label{fig:homogeneous}
\end{figure} 

To prove Theorem \ref{theorem:main}, we pin down what Cromwell actually proved in his proof of Cromwell inequality.

\begin{theorem}[Cromwell \cite{cr}]
\label{theorem:Cromwell2}
Let $D$ be a connected homogeneous diagram of a link $L$. Let $B_{i}$ $(i=1,\ldots,d)$ be the block of its Seifert graph $G_D$. Then $P_{L}(v,z)$ contains a monomial $v^{a}z^{b}$ with $a=\sum_{i=1}^{d}\varepsilon(B_i)\rank(B_i)$ and $b=\rank G_D(= 1-\chi(L))$. Here $\varepsilon(B_i)$ is the sign of the block $B_i$.
\end{theorem}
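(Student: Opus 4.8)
The plan is to recover Theorem~\ref{theorem:Cromwell2} by unwinding Cromwell's proof of the Cromwell inequality and keeping track of the extremal term throughout. Concretely I would prove by induction the sharper assertion that for a connected homogeneous diagram $D$ one has $\max\deg_z P_L = \rank G_D$ and that the coefficient of $z^{\rank G_D}$ in $P_L$ is, up to sign, the single monomial $v^{a}$ with $a = \sum_i \varepsilon(B_i)\rank(B_i)$; the theorem is then immediate. Carrying ``is a single monomial'' rather than merely ``contains the monomial'' is what lets the pieces be recombined without cancellation.

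\emph{Step 1: reduce to a single block.} A cut vertex of $G_D$ is a Seifert circle along which $D$ decomposes as a diagrammatic connected sum $D = D_1 \natural D_2$, with $G_{D_1}, G_{D_2}$ spanned by the two groups of blocks meeting at that vertex, each $D_i$ again a connected homogeneous diagram. The HOMFLY polynomial is multiplicative under connected sum, $P_L = P_{L_1}P_{L_2}$; the quantity $\rank G_D$ is additive over the block decomposition (standard additivity of graph rank over blocks), and $a = \sum_i \varepsilon(B_i)\rank(B_i)$ is additive by construction. If the inductive assertion holds for $D_1$ and $D_2$, then $[z^{\rank G_{D_i}}]P_{L_i} = \pm v^{a_i}$, so $[z^{\rank G_{D_1}+\rank G_{D_2}}](P_{L_1}P_{L_2}) = \pm v^{a_1+a_2}$ and no lower $z$-term interferes, since the maximal $z$-degrees add and $\Z[v^{\pm1},z^{\pm1}]$ is an integral domain. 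Iterating, we may assume $G_D$ is a single block; the base case (single edge or single vertex) is the unknot, $P_L = 1 = v^0z^0$. Passing to the mirror diagram when the block is negative (which substitutes $v \mapsto v^{-1}$, $z \mapsto -z$, fixing $b$ and replacing $a$ by $-a$) lets us assume the block is positive. Thus $D$ is a positive diagram with $2$-connected Seifert graph; put $r = \rank G_D = c(D)-s(D)+1 = 1-\chi(L)$, and we must show $[z^r]P_L = \pm v^r$. Since $D$ is positive, the Morton--Franks--Williams lower bound gives $\min\deg_v P_L \geq w(D)-s(D)+1 = r$, so it even suffices to prove $[z^r]P_L \neq 0$.

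\emph{Step 2: induct on $c(D)$ via the skein relation} $P_L = v^2 P_{L'} + vz\,P_{L_0}$, where $L'$ switches a chosen crossing $e$ and $L_0$ is the oriented resolution at $e$. The oriented resolution does not change the Seifert circles, so $G_{L_0} = G_D - e$, which is connected (a $2$-connected graph minus an edge), has all edges positive, and has $\rank = r-1$; hence $L_0$ has a connected homogeneous diagram with fewer crossings, and by induction $[z^{r-1}]P_{L_0} = \pm v^{r-1}$, so $vz\,P_{L_0}$ contributes $\pm v^r$ to $[z^r]P_L$. It remains to see that $v^2[z^r]P_{L'}$ does not cancel this. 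If $e$ lies in a twist region of length $\geq 2$, switching it shortens the region by two crossings (a pair $\sigma\sigma^{-1}$ cancels), so $L'$ has a homogeneous diagram with $\rank G_{L'} = r-2$, whence Morton's inequality gives $\max\deg_z P_{L'} \leq r-2 < r$; the cancellation term vanishes and the induction closes. When $G_D$ is simple one must instead choose $e$ so that the switched diagram simplifies enough to force $\max\deg_z P_{L'} \leq r-1$.

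This last point is the main obstacle. Switching a crossing inside a $2$-connected positive block destroys homogeneity, so the inductive hypothesis cannot be applied to $L'$, and a priori $\max\deg_z P_{L'}$ could equal the Morton bound $r$. What is needed is the subsidiary fact that a diagram whose Seifert graph is a positive block with exactly one negative edge cannot attain $z$-degree $\rank G$ --- morally, the lone ``wrong-sign'' crossing can be absorbed, yielding a cheaper surface and making Morton's inequality strict for such almost-positive diagrams. Proving this cleanly, together with the correct choice of $e$ in the simple case and the handling of reducible crossings created by the resolution, is precisely the delicate combinatorial bookkeeping in Cromwell's original argument; everything else is the formal scaffolding above. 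Note that the equality clause of Theorem~\ref{theorem:Cromwell} is not needed here, since only existence of the monomial (not sharpness of the $v$-degree bound) is asserted.
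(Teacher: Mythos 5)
There is a genuine gap, and also a false intermediate claim. First, the strengthening you build the induction on --- that the coefficient of $z^{\rank G_D}$ in $P_L$ is a \emph{single} monomial $\pm v^{a}$ --- is not true for homogeneous links. Specializing $v=1$ turns this coefficient into the leading coefficient of the Alexander polynomial, which for the (alternating, hence homogeneous) knot $6_1$ equals $\pm 2$; since the coefficients all have the same sign, the $z^{2}$-coefficient of $P_{6_1}$ must consist of two distinct powers of $v$ (indeed it is $-(1+v^2)$). This is exactly why the paper's Theorem \ref{theorem:main2} is phrased as ``$h_L(v)$ \emph{contains} the monomial'' and discusses $\min\deg_v h_L(v)$: the top $z$-coefficient is in general a polynomial in $v$ with same-signed coefficients, not a monomial. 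Relatedly, your remark that for a positive $2$-connected block ``it even suffices to prove $[z^r]P_L\neq 0$'' does not follow: the Morton--Franks--Williams lower bound only gives $\min\deg_v[z^r]P_L\geq r$, and you have no matching upper bound forcing $[z^r]P_L=\pm v^r$.

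Second, and more seriously, the step that actually carries the theorem is the one you leave open: controlling $v^2[z^r]P_{L'}$ when the switched crossing lies in a $2$-connected positive block with simple underlying graph. You candidly note that the needed subsidiary fact (a positive block with exactly one negative edge cannot attain $z$-degree $\rank G$) ``is precisely the delicate combinatorial bookkeeping in Cromwell's original argument''; but that bookkeeping \emph{is} the proof --- the block decomposition, mirroring, and twist-region cases are the easy part --- and the subsidiary fact is neither proven nor obviously true as stated. For comparison, the paper (following Cromwell) does not switch one crossing at a time and try to bound the error term; it organizes the entire computation into a single skein resolution tree adapted to an ascending-diagram construction and proves a global sign-coherence statement: every terminal node contributing to $z$-degree $\rank G_D$ has tree Seifert graph and contributes a monomial of sign $\prod_i\varepsilon(B_i)^{\rank B_i}$, so the explicit contribution $v^{a}z^{b}$ of the all-resolutions terminal node can never be cancelled. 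That route avoids ever having to estimate the HOMFLY polynomial of the non-homogeneous crossing-switched diagrams on which your induction founders.
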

For reader's convenience we give a brief outline of the proof, emphasizing the key properties proved in the proof. 

\begin{proof}[Sketch of proof]
Starting from a given diagram $D$ of a link $L$, the HOMFLY polynomial can be computed by applying skein relation repeatedly.
Such a computation is summarized by so-called the \emph{skein (resolution) tree}, a rooted binary tree having the following properties.

\begin{itemize}
\item Each node is labelled by a link diagram and each edge is labelled by a monomial.
\item The root is labelled by the diagram $D$.
\item The triple $(D_{\sf parent},D_{\sf child 1},D_{\sf child 2})$ consisting of the diagram $D_{\sf parent}$ associated to a non-terminal nodes and the diagrams $D_{\sf child 1},D_{\sf child 2}$ associated to its child nodes forms a skein triple $(D_+,D_-,D_0)$ or $(D_{-},D_+,D_0)$.

In the former case, the edge connecting $D_{\sf parent}$ to $D_{\sf child 1}$ (resp. $D_{\sf parent}$ to $D_{\sf child 2}$) is labelled by the monomial $v^{2}$ (resp. $vz$). In the latter case, the edge connecting $D_{\sf parent}$ to $D_{\sf child 1}$ (resp. $D_{\sf parent}$ to $D_{\sf child 2}$) is labelled by the monomial $v^{-2}$ (resp. $-v^{-1}z$) (See Figure \ref{fig:skein-tree}).
\item A diagram associated a terminal node represents the unlink.
\end{itemize}

\begin{center}
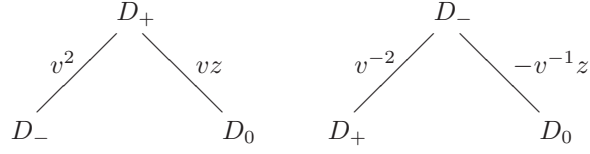
\begin{figure}[htbp]
\begin{picture}(220,55)
\put(40,50) {$D_+$}
\put(10,15){\line(1,1){30}}
\put(50,45){\line(1,-1){30}}
\put(0,5) {$D_-$}
\put(80,5) {$D_0$}
\put(15,30) {$v^{2}$}
\put(70,30) {$vz$}
\put(160,50) {$D_-$}
\put(130,15){\line(1,1){30}}
\put(170,45){\line(1,-1){30}}
\put(120,5) {$D_+$}
\put(200,5) {$D_0$}
\put(130,30) {$v^{-2}$}
\put(190,30) {$-v^{-1}z$}
\end{picture}
\label{fig:skein-tree}
\caption{Skein resolution tree}
\end{figure}
\end{center}

By the skein relation of the HOMFLY polynomial, the HOMFLY polynomial $P_L(v,z)$ is given by
\[ P_L(v,z) = \sum_{n}\pi(n) \left(\frac{v^{-1}-v}{z}\right)^{\#n-1} \]
where the summation runs all the terminal nodes $n$, 
$\pi(n)$ is the product of labels of edges that appear in the path from the terminal node $n$ to the root, and $\#n$ is the number of components of the unlink associated to the terminal node $n$.

Cromwell showed that in an appropriate skein tree coming from ascending diagram construction, if a terminal node $n$ satisfies $\max \deg_z \pi(n) \left(\frac{v^{-1}-v}{z}\right)^{\#n-1} = \rank G_D$ (i.e., if a terminal node $n$ contributes the highest $z$-degree terms of $P_L(v,z)$), it has the following properties.
\begin{itemize}
\item[(a)] The Seifert graph of a diagram associated to $n$ is a tree. Hence it represent the unknot and $\pi(n) \left(\frac{v^{-1}-v}{z}\right)^{\#n-1} = \pi(n)$ is a monomial.
\item[(b)] The sign of the monomial $\pi(n) \left(\frac{v^{-1}-v}{z}\right)^{\#n-1}$ does not depend on a choice of such a terminal node $n$. The sign is given by $\prod_{i=1}^{d} \varepsilon(B_i)^{\rank B_i}$. In particular, contributions of the terminal nodes never cancel.
\end{itemize}

Then the assertion comes from an observation that for the terminal node $n_0$ obtained by repeatedly resolving the crossings (i.e. the rightmost terminal node), its contribution is given by 
\[ \pi(n_0) \left(\frac{v^{-1}-v}{z}\right)^{\#n_0-1} = v^{\sum_{i=1}^{d}\varepsilon(B_i)\rank(B_i)}z^{\rank G_D} \]
\end{proof}

We rewrite $\sum_{i=1}^{d}\varepsilon(B_i)\rank(B_i)$ in terms of the quantities from diagram, rather than its Seifert graph.
\begin{proposition}
\label{prop:key}
For a connected homogeneous diagram $D$, $\sum_{i=1}\varepsilon(B_i)\rank(B_i) = -s(D)+w(D)+2s_+(D) - 1$.
\end{proposition}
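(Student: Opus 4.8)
The plan is to reduce the identity to a comparison among the ranks of three signed graphs built from $D$: the Seifert graph $G_D$ itself, and the two monochromatic spanning subgraphs $G_+ := (V(G_D), E_+)$ and $G_- := (V(G_D), E_-)$ obtained by keeping only the positive, respectively negative, crossings. Here $G_+$ is the Seifert graph of the diagram obtained from $D$ by resolving all negative crossings, so its number of connected components is $s_+(D)$; likewise write $s_-(D)$ for the number of components of $G_-$. The only general fact needed is that rank is additive over blocks: if $G$ is connected with blocks $B_1,\dots,B_d$, then $\rank(G) = \sum_i \rank(B_i)$. This follows at once from $\rank(B_i) = \#E(B_i) - \#V(B_i) + 1$ (blocks are connected) together with the block--cut-tree identity $\sum_i(\#V(B_i) - 1) = \#V(G) - 1$.

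Next comes the step where homogeneity enters. Since every block of $G_D$ is monochromatic, passing to $G_+$ destroys all edges of the negative blocks while leaving each positive block untouched; I would check that the blocks of $G_+$ that carry an edge are \emph{exactly} the positive blocks of $G_D$. Indeed, a positive block of $G_D$ stays $2$-connected (or a bridge) inside $G_+$ and remains maximal there, since a strictly larger $2$-connected subgraph of $G_+$ would be one in $G_D$ as well; conversely, any edge-bearing block of $G_+$ is contained in some block of $G_D$, which must be positive by homogeneity and hence coincides with it. Consequently $\rank(G_+) = \sum_{\varepsilon(B_i)=+1}\rank(B_i)$ and, symmetrically, $\rank(G_-) = \sum_{\varepsilon(B_i)=-1}\rank(B_i)$; adding these and using block-additivity for $G_D$ gives the clean relation $\rank(G_+) + \rank(G_-) = \rank(G_D)$. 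This would fail without homogeneity --- e.g. an alternating even cycle is a single non-monochromatic block --- so the hypothesis is genuinely used here.

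The rest is bookkeeping. Writing $c_+(D), c_-(D)$ for the numbers of positive and negative crossings, we have $\rank(G_\pm) = -s(D) + c_\pm(D) + s_\pm(D)$ since $G_\pm$ has vertex set all of $V(G_D)$, and $\rank(G_D) = -s(D) + c(D) + 1$ because $D$ is connected and $c(D) = c_+(D) + c_-(D)$. Substituting into $\rank(G_+) + \rank(G_-) = \rank(G_D)$ produces the auxiliary identity $s_+(D) + s_-(D) = s(D) + 1$. Finally,
\[
\sum_i \varepsilon(B_i)\rank(B_i) = \rank(G_+) - \rank(G_-) = \big(c_+(D) - c_-(D)\big) + \big(s_+(D) - s_-(D)\big) = w(D) + s_+(D) - s_-(D),
\]
and replacing $s_-(D)$ by $s(D) + 1 - s_+(D)$ gives precisely $-s(D) + w(D) + 2s_+(D) - 1$, as required.

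The one genuinely non-formal point --- and the main obstacle --- is the block-correspondence claim of the second paragraph: that deleting the negatively signed edges neither merges two positive blocks nor breaks one apart, so that $\rank(G_+)$ is exactly the sum of the ranks of the positive blocks. Once that is in place, everything else is manipulation of the definition of rank. A small additional point to record carefully is the identification of $s_+(D)$, defined in the introduction via connected components of the resolved diagram, with $\#(G_+)$; this uses the standard facts that an oriented resolution of a crossing leaves the collection of Seifert circles unchanged and that a link diagram has the same number of connected components as its Seifert graph.
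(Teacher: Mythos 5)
Your proof is correct and is in substance the same as the paper's: both arguments hinge on the single non-formal point that, by homogeneity, deleting the negatively signed edges leaves each positive block intact and reduces each negative block to isolated vertices --- which you phrase as the block correspondence for $G_+$ and $G_-$, and which the paper phrases as the vertex counts $s(D)+(P+N-1)=\sum_i\#V(B_i^{+})+\sum_i\#V(B_i^{-})$ and $s_+(D)+(N-1)=\sum_{i}\#V(B_i^{-})$. Your repackaging through rank additivity over blocks and the identities $\rank(G_+)+\rank(G_-)=\rank(G_D)$ and $s_+(D)+s_-(D)=s(D)+1$ is a clean way to organize the same arithmetic, but it is not a different route.
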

\begin{proof}
Let $P$ and $N$ be the number of positive and negative blocks, and let $B^{+}_1,\ldots, B^{+}_P$ (resp. $B^{-}_1,\ldots, B^{-}_N$) be the positive (resp. negative) blocks of $G_D$.

In the procedure of the decomposition of Seifert graph $G_D$ into its blocks, cutting a graph at cut vertex increases the number of vertices by one. Therefore 
\[ s(D) + (P+N-1) = \sum_{i=1}^{P}\#V(B_i^{+}) + \sum_{i=1}^{N}\#V(B_i^{-}) \]
holds.
Similarly, $s_+(D)$ is the number of connected components of the graph obtained by deleting all the negative-signed edges hence 
\[ s_+(D) + (N-1) = \sum_{i=1}^{N}\#V(B_i^{-})\]
holds.
Thus
\begin{align*}
\sum_{i=1}^{d}\varepsilon(B_i)\rank(B_i) 
&= \sum_{i=1}^{P} \rank(B^{+}_i) - \sum_{i=1}^{N} \rank(B^{-}_i)\\
&= \sum_{i=1}^{P} (-\#V(B_i^{+})+\#E(B_i^{+})+1) - \sum_{i=1}^{N} (-\#V(B_i^{-})+\#E(B_i^{-})+1)\\
&= -\sum_{i=1}^{P}\#V(B_i^{+}) + \sum_{i=1}^{N}\#V(B_i^{-}) +w(D) +P-N\\
&= -s(D)-(P+N-1)  + 2\sum_{i=1}^{N}\#V(B_i^{-}) + w(D) + P-N \\
&= -s(D) + w(D) +2s_+(D) - 1
\end{align*}

\end{proof}

Then we prove the following assertion which is a bit stronger than Theorem \ref{theorem:main}.

\begin{theorem}
\label{theorem:main2}
For a homogeneous link $L$ with homogeneous diagram $D$, let $h_L(v) \in \Z[v,v^{-1}]$ be the highest $z$-degree term of the HOFMLY polynomial $P_L(v,z)$ of $L$, the coefficient of $z^{1-\chi(L)}$. Then $h_L(v)$ contains a monomial $v^{-s(D)+w(D)+2s_+(D)+1-2\#_{sp} L}$. In particular,
\[ \min \deg_v h_L(v) \leq -s(D)+w(D)+2s_+(D) +1 -2\#_{sp} L\]
\end{theorem}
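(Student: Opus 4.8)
The plan is to treat the connected and the disconnected cases separately, reducing the latter to the former via the behaviour of the HOMFLY polynomial under split union. \emph{Connected case.} If $D$ is connected then $\#_{sp}L=1$, and by Proposition~\ref{prop:key} the exponent to be realised is
\[ -s(D)+w(D)+2s_+(D)+1-2\#_{sp}L = -s(D)+w(D)+2s_+(D)-1 = \sum_i\varepsilon(B_i)\rank(B_i)=:a. \]
By Theorem~\ref{theorem:Cromwell2}, $P_L(v,z)$ contains the monomial $v^{a}z^{\rank G_D}$, and $\rank G_D=1-\chi(L)$; hence $h_L(v)$, the coefficient of $z^{1-\chi(L)}$ in $P_L(v,z)$, contains $v^{a}$. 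This proves the connected case and in particular gives $\min\deg_v h_L(v)\le a$.

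\emph{Reduction of the disconnected case.} A homogeneous diagram is non‑splittable \cite[Corollary 3]{cr}, so $D=D_1\sqcup\cdots\sqcup D_k$ with $k=\#_{sp}L$, where each $D_i$ is a connected homogeneous diagram of the split component $L_i$ (a connected component of a homogeneous Seifert graph is a union of blocks, hence is again homogeneous). Iterating $P_{A\sqcup B}=\tfrac{v^{-1}-v}{z}P_AP_B$ gives
\[ P_L(v,z)=v^{-(k-1)}(1-v^2)^{k-1}z^{-(k-1)}\prod_{i=1}^k P_{L_i}(v,z). \]
Since $\max\deg_z P_{L_i}=1-\chi(L_i)$ and $\chi(L)=\sum_i\chi(L_i)$, the top $z$‑degree of $P_L$ is $\sum_i(1-\chi(L_i))-(k-1)=1-\chi(L)$, and taking that coefficient,
\[ h_L(v)=v^{-(k-1)}(1-v^2)^{k-1}\prod_{i=1}^k h_{L_i}(v). \]
As $s$, $w$ and $s_+$ are additive over the connected components of $D$, the target exponent equals $\sum_i a_i-(k-1)$, where $a_i:=-s(D_i)+w(D_i)+2s_+(D_i)-1$.

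\emph{The crux.} By the connected case, $v^{a_i}$ appears in $h_{L_i}(v)$; I claim in addition that $a_i=\min\deg_v h_{L_i}(v)$. Granting this, $\prod_i h_{L_i}(v)$ has minimal $v$‑degree $\sum_i a_i$ with nonzero leading coefficient, and multiplying by $v^{-(k-1)}(1-v^2)^{k-1}$, whose lowest term is $v^{-(k-1)}$, shifts this minimal degree to $\sum_i a_i-(k-1)$ without killing the leading coefficient; so $v^{\sum_i a_i-(k-1)}$ occurs in $h_L(v)$, which is the asserted monomial, and in fact $\min\deg_v h_L(v)=\sum_i a_i-(k-1)$. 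It remains to show $\min\deg_v h_{L_i}(v)=a_i$ for a connected homogeneous $D_i$; the inequality $\le$ is the connected case, and for $\ge$ I would re‑enter Cromwell's proof of Theorem~\ref{theorem:Cromwell2} and strengthen property~(b): beyond the fact that every terminal node $n$ contributing to $z^{\rank G_{D_i}}$ yields a monomial $\pi(n)$ of one common sign, a careful accounting of the $v^{\pm1}$ and $v^{\pm2}$ edge‑labels along the root‑to‑leaf path should give $\deg_v\pi(n)\ge a_i$ for every such $n$ (indeed one expects $\pi(n)=\varepsilon v^{a_i}z^{\rank G_{D_i}}$ for all of them, i.e.\ that $h_{L_i}(v)$ is, up to sign, a single power of $v$). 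Upgrading Cromwell's sign‑coherence of the top‑$z$‑degree contributions to this $v$‑exponent statement is the step I expect to be the main obstacle; the remainder is bookkeeping.
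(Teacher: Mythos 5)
Your connected case is exactly the paper's argument (Theorem~\ref{theorem:Cromwell2} plus Proposition~\ref{prop:key}), and your reduction of the split case via $P_{A\sqcup B}=\frac{v^{-1}-v}{z}P_AP_B$ and the identity $\sum_i a_i-(k-1)=-s(D)+w(D)+2s_+(D)+1-2\#_{sp}L$ is also the paper's route. But you have manufactured an obstacle that is not there. For the ``in particular'' inequality --- which is the content actually used in Theorem~\ref{theorem:main} and Corollary~\ref{cor:slice} --- you do not need $\min\deg_v h_{L_i}(v)=a_i$. Since $\Z[v,v^{-1}]$ is an integral domain, the minimal $v$-degree is additive under products, so from $h_L(v)=(v^{-1}-v)^{k-1}\prod_i h_{L_i}(v)$ you get
\[
\min\deg_v h_L(v)=-(k-1)+\sum_{i=1}^k\min\deg_v h_{L_i}(v)\le -(k-1)+\sum_{i=1}^k a_i,
\]
using only the upper bound $\min\deg_v h_{L_i}(v)\le a_i$ from the connected case. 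That is precisely the paper's proof of the split case; the strengthening of Cromwell's property (b) that you flag as ``the main obstacle'' is not required and should be dropped.

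The one place where your instinct detects a real issue is the literal claim that $h_L(v)$ \emph{contains the monomial} $v^{-s(D)+w(D)+2s_+(D)+1-2\#_{sp}L}$ when $L$ is split: without knowing that the $v^{a_i}$ term is the \emph{lowest} term of $h_{L_i}(v)$ (or some other control on cancellation in the product), monomial containment does not formally follow from containment in each factor. Note, however, that the paper's own proof of Theorem~\ref{theorem:main2} in the split case also only establishes the displayed inequality and does not address the monomial statement either; so this is a (minor) gap in the paper rather than a defect of your approach, and it does not affect anything downstream. My recommendation: keep your setup, replace the ``crux'' paragraph by the additivity-of-$\min\deg_v$ argument above, and either restrict the monomial-containment assertion to the non-split case or note explicitly that only the inequality is claimed for split links.
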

\begin{proof}
The case $L$ is non-split follows from Theorem \ref{theorem:Cromwell2} and Proposition \ref{prop:key}.

For split link $L=L_1 \sqcup L_2 \sqcup \cdots \sqcup L_{\#_{sp} L}$, its homogeneous diagram $D$ is a disjoint union of homegeneous diagram $D_i$ of $L_i$ hence
\begin{align*}
\min \deg_v h_L(v)
&= \sum_{i=1}^{\#_{sp} L} \min \deg_v h_{L_i}(v) - (\#_{sp} L -1)\\
&\leq \sum_{i=1}^{\#_{sp} L} (-s(D_i) + w(D_i) +2s_+(D_i) - 1) - (\#_{sp} L -1)\\
& = -s(D)+w(D)+2s_+(D) +1 -2\#_{sp} L
\end{align*}
\end{proof}

\begin{remark}
\label{remark:positive}
The inequality 
\[\min \deg_v h_L(v) \leq -s(D)+w(D)+2s_+(D)+1-2\#_{sp} L \leq 1-\chi_4(L) \]
in Theorem \ref{theorem:main2} often attains the equality for non-positive diagrams,   even for alternating links.
For example, a non-positive alternating knot $6_1$ attains the equality. Thus, we cannot prove Conjecture \ref{conj:positive} (i) (ii) just by looking at the highest $z$-degree part $h_L(v)$ of the HOMFLY polynomial.
\end{remark}

The proof of Theorem \ref{theorem:signature} is similar to the proof of Theorem \ref{theorem:main}. We observe that $\sum_{i=1}^{d}\varepsilon(B_i)\rank(B_i)$ is nothing but the signature for alternating diagrams.

\begin{proof}[Proof of Theorem \ref{theorem:signature}]
For a connected, reduced alternating diagram $D$ of a link $L$, the signature is given by 
\[ \sigma(L)= w(D)- (d_+ - d_-) \]
where $d_{\pm}$ is the number of positive and negative edges of a spanning tree of the Seifert graph $G_D$ \cite[Theorem 2 (1)]{tr}\footnote{The formula in \cite[Theorem 2 (1)]{tr} contains an erroneous factor $\frac{1}{2}$.}.
Let $B^{+}_1,\ldots, B^{+}_P$ (resp. $B^{-}_1,\ldots, B^{-}_N$) be the positive (resp. negative) blocks of the Seifert graph $G_D$.

Since $d_+ = \sum_{i=1}^{P}(\#V(B_i^+) -1)$ and $d_{-} = \sum_{i=1}^{N}(\#V(B_i^-) -1) $
we conclude 
\begin{align*}
\sigma(L) &= w(D)- (d_+ - d_-)\\
&= \sum_{i=1}^{P}\#E(B_i^{+}) - \sum_{i=1}^{N} \#E(B_i^{-}) -  \sum_{i=1}^{P} (\#V(B_i^{+})-1) + \sum_{i=1}^{N} (\#V(B_i^{-})-1)\\
&= \sum_{i=1}^{P} (-\#V(B_i^{+})+\#E(B_i^{+})+1) - \sum_{i=1}^{N} (-\#V(B_i^{-})+\#E(B_i^{-})+1)\\
&= \sum_{i=1}^{d}\varepsilon(B_i)\rank(B_i)
\end{align*}
By Theorem \ref{theorem:Cromwell2} this shows the theorem for non-split case. Split  case easily follows from non-split case.
\end{proof}

\end{document}